\theoremstyle{plain}
\newtheorem*{Theorem}{Theorem}
\newtheorem{theorem}{Theorem}[section]
\newtheorem{proposition}[theorem]{Proposition}
\newtheorem{lemma}[theorem]{Lemma}
\newtheorem*{Question}{Question}
\newtheorem{corollary}[theorem]{Corollary}
\newtheorem*{Proposition}{Proposition}
\theoremstyle{definition}
\newtheorem{definition}[theorem]{Definition}
\newtheorem{example}[theorem]{Example}
\theoremstyle{remark}
\newtheorem{remark}[theorem]{Remark}
\numberwithin{equation}{theorem}
\def\lra{\longrightarrow}
\def\Lra{\Longrightarrow}
\DeclarePairedDelimiter{\brackets}{(}{)}
\newcommand{\T}[2]{\operatorname{{\tau}}_{{#2}} (#1)}
\newcommand{\End}[2]{\operatorname{End}_{#1} (#2)}
\renewcommand{\hom}[3]{\operatorname{Hom}_{#1} (#2, #3)}
\newcommand{\IM}{\operatorname{Im} \brackets}
\newcommand{\Ass}{\operatorname{Ass} \brackets}
\newcommand{\Ann}{\operatorname{Ann}_R }
\newcommand{\Ext}[4]{\operatorname{Ext}^{#1}_{#2} (#3, #4)}
\newcommand{\injd}{\operatorname{inj \,dim}_R}
\newcommand{\depth}{\operatorname{depth} }
\begin{document}

\title[]{Trace Ideals and the Gorenstein Property}

\author{Haydee Lindo}
\author{Nina Pande}
\curraddr{Dept. of Mathematics \& Statistics, Williams College, Williamstown, MA, USA}
\email{haydee.m.lindo@williams.edu}
\email{ngp3@williams.edu}

\date{\today}

\keywords{trace ideal, Gorenstein ring}

\subjclass[2010]{13C05, 13H10}

\maketitle


\begin{abstract}

Let $R$ be a local Noetherian commutative ring. We prove that $R$ is an  Artinian Gorenstein ring if and only if every ideal in $R$ is a trace ideal. We discuss when the trace ideal of a module coincides with its double annihilator. 
\end{abstract}
\section{Introduction}

Let  $R$ be a ring and $M$ an $R$-module. The trace ideal of $M$, denoted $\T R M$, is the ideal generated by the homomorphic images of $M$ in $R$.

The theory of trace ideals has proved useful in various contexts but fundamentally the literature is dominated by two avenues of inquiry. First, given an $R$-module $M$ what does its trace ideal say about $M$? For instance, it is known that trace ideals detect free-summands and that $M$ is projective if and only if its trace ideal is idempotent; see  \cite{MaxOrders,Lam,Whitehead1980,Herberatraceideal}. More recently,  Lindo   discussed the role  of the trace ideal of a module in calculating the center of its endomorphism ring; see  \cite{Lindo1}. Also, Herzog, Hibi, Stamate and Ding have studied the trace ideal of the canonical module to understand deviation from the Gorenstein property in $R$; see \cite{tracecanonical, DingTrace}. 

A second category of question asks: given a ring, what do the characteristics of its class of trace ideals imply about the ring? For example, in \cite{TraceProperty1987}  Fontana, Huckaba and Papick characterize Noetherian domains where every trace ideal is prime; see also  \cite{TraceProperty1987, LucasMcNair2011, LucasRTP}.





This paper addresses both of these questions when $R$ is a local Artinian Gorenstein ring. In this setting, we show that the trace ideal of an $R$-module $M$ coincides with its double annihilator; see Proposition \ref{refl}. In Remark \ref{Art} we recall that all ideals over an Artinian Gorenstein ring are trace ideals. We then show that this property characterizes local Artinian Gorenstein rings; see Theorem \ref{main}. We prove 

\begin{Proposition} 
Let $R$ be a local Artinian Gorenstein ring  and $M$ a finitely generated $R$-module. Then $\T R M= \Ann \Ann M$.
\end{Proposition}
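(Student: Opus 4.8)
The plan is to use the fact that over a local Artinian Gorenstein ring $R$, the injective hull $E = E_R(k)$ of the residue field is isomorphic to $R$ itself, so Matlis duality $(-)^\vee = \hom{R}{-}{E}$ is realized by $\hom{R}{-}{R}$. This is the engine that lets us translate between Hom-into-$R$ computations (which define the trace ideal) and annihilator computations (which behave well under Matlis duality).

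\textbf{Key steps.} First I would record the containment $\T{R}{M} \subseteq \Ann \Ann M$, which holds over any commutative ring: if $\varphi \colon M \to R$ is any homomorphism, then $\Ann M$ annihilates $\varphi(M)$, so $\IM{\varphi} \subseteq \Ann \Ann M$; taking the sum over all $\varphi$ gives the inclusion. The substance is the reverse containment. For this I would invoke $E_R(k) \cong R$ and identify $M^* = \hom{R}{M}{R}$ with the Matlis dual $M^\vee$. Then the trace ideal $\T{R}{M}$ is, by definition, $\sum_{\varphi \in M^*} \IM{\varphi}$, i.e.\ the image of the natural evaluation map $M \otimes_R M^* \to R$, $m \otimes \varphi \mapsto \varphi(m)$. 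I would compute the annihilator of $R / \T{R}{M}$: an element $r \in R$ kills $R/\T{R}{M}$ iff $r \in \T{R}{M}$, so I instead want to show $\Ann(R/\T{R}{M}) \supseteq \Ann \Ann M$ translated appropriately — more cleanly, show $\T{R}{M} = \Ann(\Ann_R M)$ by dualizing. The cleanest route: apply Matlis duality to the exact sequence $0 \to \Ann M \to R \to \hom{R}{M}{M}$-type data, or more directly use that $(R/I)^\vee \cong \Ann_R I$ and $(\Ann_R I)^\vee \cong R/I$ for ideals $I$, since $R$ is Gorenstein Artinian. Concretely, $\Ann_R \Ann_R M$ is the annihilator of $(\Ann_R M)$, and since $M$ is faithful exactly over $R/\Ann M$, I would show $M^* $ generates, under evaluation, precisely the ideal dual to $M^\vee / (\text{torsion})$.

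Here is the slicker argument I would actually write out. Set $I = \Ann_R M$. Since $M$ is a module over $\bar R = R/I$ and it is a \emph{faithful} $\bar R$-module, and $\bar R$ need not be Gorenstein, I instead argue directly: the evaluation image $\T{R}{M}$ is the smallest ideal $J$ such that the map $M \to \hom{R}{J}{J}$... no. The genuinely efficient path uses: for a finitely generated module $M$ over the self-injective ring $R$, $\hom{R}{M}{R}$ is again finitely generated and $\Ann \hom{R}{M}{R} = \Ann M$, and moreover $\T{R}{M} = \Ann_R(\ker(M^{**} \to \text{?}))$. Rather than chase this, I would prove $\Ann\Ann M \subseteq \T{R}{M}$ by a localization/length argument: it suffices to check the two ideals agree after applying $\hom{R}{-}{R} = (-)^\vee$, i.e.\ that $R/\T{R}{M}$ and $R/\Ann\Ann M$ have Matlis-dual submodules of $R$ that coincide; since $(R/\Ann\Ann M)^\vee = \Ann(\Ann\Ann M) = \Ann M$ (double annihilators in an Artinian Gorenstein ring collapse: $\Ann\Ann\Ann M = \Ann M$), and $(R/\T{R}{M})^\vee = \Ann\T{R}{M}$, I am reduced to showing $\Ann \T{R}{M} = \Ann M$. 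That last equality is concrete: $\Ann M \subseteq \Ann \T{R}{M}$ is the easy inclusion above dualized, and $\Ann \T{R}{M} \subseteq \Ann M$ holds because if $x$ kills $\T{R}{M} = \sum \IM \varphi$ then $x \varphi(m) = 0$ for all $\varphi, m$; choosing enough $\varphi$'s (using that $M^* = M^\vee$ separates points of $M$, as $E = R$ is an injective cogenerator) forces $xM = 0$.

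\textbf{Main obstacle.} The crux is the point where injectivity of $R$ is used as a cogenerator: one needs that $\hom{R}{M}{R}$ is large enough that $\sum_{\varphi} \IM \varphi$ sees all of $\Ann \Ann M$, equivalently that $\Ann_R \hom{R}{M}{R} = \Ann_R M$ and that annihilators of ideals in $R$ are "reflexive" ($\Ann\Ann\Ann M = \Ann M$). Both facts are exactly the manifestation of $R$ being a zero-dimensional Gorenstein ring (self-injective, with the double-annihilator correspondence between ideals being an involution on the set of ideals), so the real work is assembling the Matlis-duality dictionary — that $\hom{R}{-}{R}$ is exact, that $(R/I)^\vee \cong \Ann I$, and that $(-)^{\vee\vee} \cong \mathrm{id}$ on finitely generated modules — and then the equalities drop out. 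I expect no surprises beyond making sure the identification $M^* \cong M^\vee$ is used consistently and that finite generation of $M$ is invoked where Matlis reflexivity requires it.
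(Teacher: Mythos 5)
Your final argument is correct and, once the exploratory dead-ends in the middle of your write-up are stripped away, it has the same skeleton as the paper's proof: both reduce the statement to the single identity $\Ann {\T R M} = \Ann M$ and then apply the double-annihilator property of local Artinian Gorenstein rings (Lemma \ref{GorAnn}) to the ideal $\T R M$, yielding $\T R M = \Ann \Ann {\T R M} = \Ann \Ann M$. The one genuine difference is how that identity is obtained. The paper cites two external facts --- every finitely generated module over an Artinian Gorenstein ring is reflexive \cite{Vasc1}, and reflexivity forces $\Ann M = \Ann {\T R M}$ \cite{Lindo1} --- whereas you prove the nontrivial containment $\Ann {\T R M} \subseteq \Ann M$ directly from the identification $R \cong E_R(k)$: since $R$ is then an injective cogenerator, $\hom R M R$ separates points of $M$, so if $x$ kills every $\varphi(m)$ then $\varphi(xm)=0$ for all $\varphi$ and hence $xM = 0$. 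This is self-contained and arguably more elementary, at the modest cost of justifying $R \cong E_R(k)$; the paper's route is shorter on the page but outsources the content to the two references. Two small remarks: the easy containment $\T R M \subseteq \Ann \Ann M$ that you establish first becomes redundant once you have the chain of equalities above, and your reduction ``it suffices to compare Matlis duals'' silently uses Lemma \ref{GorAnn} again (two ideals with equal annihilators coincide exactly because $I = \Ann \Ann I$), so you should make that dependence explicit in a final write-up.
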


\begin{Theorem}
Let $R$ be a local Noetherian ring with maximal ideal $\mathfrak m$. Then the following are equivalent
\begin{enumerate}[label=(\roman*)]
\item $R$ is an Artinian Gorenstein ring
\item Every ideal is a trace ideal.
\item Every principal ideal is a trace ideal
 
\end{enumerate}

\end{Theorem}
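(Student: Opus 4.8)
The plan is to prove the cycle $(i)\Rightarrow(ii)\Rightarrow(iii)\Rightarrow(i)$. The first implication is what is recorded in Remark \ref{Art}: by Proposition \ref{refl} one has $\tau_R(I)=\operatorname{Ann}_R\operatorname{Ann}_R(I)$ for every ideal $I$, and over an Artinian Gorenstein ring the double-annihilator identity $\operatorname{Ann}_R\operatorname{Ann}_R(I)=I$ holds, so $\tau_R(I)=I$ and $I$ is a trace ideal. The implication $(ii)\Rightarrow(iii)$ is immediate, so the substance is $(iii)\Rightarrow(i)$.

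First I would translate $(iii)$ into a concrete annihilator condition. For $a\in R$ write $(a)\cong R/\operatorname{Ann}_R(a)$; an $R$-linear map $(a)\to R$ is determined by the image $c$ of $a$, and such a $c$ is admissible exactly when $\operatorname{Ann}_R(a)\subseteq\operatorname{Ann}_R(c)$, i.e.\ $c\in\operatorname{Ann}_R\operatorname{Ann}_R(a)$, and then its image is $(c)$. Hence $\tau_R((a))=\operatorname{Ann}_R\operatorname{Ann}_R(a)$, and since $(a)\subseteq\operatorname{Ann}_R\operatorname{Ann}_R(a)$ always, condition $(iii)$ is equivalent to
\[
(a)=\operatorname{Ann}_R\operatorname{Ann}_R(a)\qquad\text{for every }a\in R.
\]

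From this I would draw three conclusions. First, $R$ has depth $0$: if $x\in\mathfrak m$ were a nonzerodivisor then $\operatorname{Ann}_R(x)=0$ would give $(x)=\operatorname{Ann}_R(0)=R$, a contradiction; so every element of $\mathfrak m$ is a zerodivisor, whence (as $\operatorname{Ass}R$ is finite, by prime avoidance) $\mathfrak m\in\operatorname{Ass}R$ and there is $0\neq s\in R$ with $\operatorname{Ann}_R(s)=\mathfrak m$, i.e.\ $0\neq s\in\operatorname{socle}(R)$. Second, $\operatorname{socle}(R)$ is one-dimensional over $k=R/\mathfrak m$: for this $s$ we get $\operatorname{Ann}_R\operatorname{Ann}_R(s)=\operatorname{Ann}_R(\mathfrak m)=\operatorname{socle}(R)$, and $(iii)$ forces this to equal $(s)=ks$. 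Third, $R$ is Artinian: if not, then $\dim R\geq 1$, so $\mathfrak m$ is not the nilradical and there is $a\in\mathfrak m$ with $a^n\neq 0$ for all $n$; then $\operatorname{Ann}_R(a^n)\subseteq\mathfrak m$ gives $s\,\operatorname{Ann}_R(a^n)\subseteq s\mathfrak m=0$, so $s\in\operatorname{Ann}_R\operatorname{Ann}_R(a^n)=(a^n)$ by $(iii)$, and therefore $s\in\bigcap_n(a)^n\subseteq\bigcap_n\mathfrak m^n=0$ by the Krull intersection theorem, contradicting $s\neq 0$. Finally, an Artinian local ring whose socle is one-dimensional over its residue field is Gorenstein, which yields $(i)$.

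I expect the main obstacle to be the Artinian step: the key observation is that $(iii)$ forces the socle generator $s$ to lie in \emph{every} nonzero principal ideal, in particular in all powers $(a)^n$ of a hypothetical non-nilpotent $a$, and then one closes the argument with the Krull intersection theorem. Once $R$ is known to be Artinian with simple socle its Gorenstein property is classical, and the initial reduction together with the socle computations are routine.
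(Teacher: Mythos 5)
Your proposal is correct and follows essentially the same route as the paper: reduce (iii) to the double-annihilator identity $(a)=\operatorname{Ann}_R\operatorname{Ann}_R(a)$ for principal ideals, deduce depth $0$ and $\mathfrak m\in\operatorname{Ass}R$, rule out positive dimension with the Krull intersection theorem, and conclude from a one-dimensional socle. Your socle step (computing $\operatorname{Ann}_R\operatorname{Ann}_R(s)=\operatorname{Ann}_R\mathfrak m$ for a single socle element $s$) is a marginally more direct version of the paper's argument via minimal nonzero ideals, but the substance is identical.
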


\section{Preliminaries}

Let $R$ be a commutative Noetherian ring and $M$ a finitely generated $R$-module.
The purpose of this section is to define the trace ideal of a module $M$ and relate it to $\Ann \Ann M$.

A trace ideal is a specific type of trace module.
\begin{definition}
Given $R$-modules $M$ and $X$, the trace (module) of $M$ in X is

\begin{align*}
\T X M &:= \displaystyle \sum_{\alpha \in {\hom R M X}} \alpha(X)\\
&\, = \hom R M X M
\end{align*}

where  $\hom R M X M$ denotes the $R$-submodule of $X$ generated by elements of the form $\alpha(m)$ for $\alpha$ in $\hom R M X$ and $m$ in $M$.

 The ideal $\T R M$ is called the trace ideal of $M$ (in $R$).

We say $A$ is a trace module (trace ideal) provided $A = \T X M$ $(=\T R M)$ for some $R$-module M.

\end{definition}

\begin{remark} \label{facts}
Note, an $R$-submodule $M$ in $X$ is a trace module in $X$ if and only if the inclusion $M\subseteq X$ induces an isomorphism $\End R M \cong \hom R M X$. Also, an ideal $I$ in $R$ is a trace ideal only if and only if it is its own trace ideal;  see \cite[Proposition 2.8]{Lindo1}.
\end{remark}

\begin{remark} \label{tracepresmatrix}
One may calculate the trace ideal of a module from its presentation matrix.  Suppose $[M]$ is a presentation matrix for an $R$-module $M$ and $A$ is a matrix whose columns generate the kernel of $[M]^*$, the transpose of $[M]$. Then there is an equality:
\[ \T R M = I_1(A);\]

where $I_1(A)$ is the ideal generated by the entries of $A$; see,   \cite[Remark 3.3]{Vascaffine}.
\end{remark}

\begin{definition}
The annihilator of $M$ (in $R$) is the ideal \[\Ann M := \{r\in R| r M = 0\}.\]
\end{definition}

\begin{lemma} \label{principal}
Let $M$ be a cyclic $R$-module. Then $\T R M = \Ann \Ann M$.
\end{lemma}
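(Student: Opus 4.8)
The plan is to prove Lemma~\ref{principal} by a direct computation using the cyclic structure of $M$. Write $M = R/I$ for some ideal $I = \Ann M$; the goal is then to show $\T R {R/I} = \Ann \Ann M = (0 :_R (0 :_R I))$. First I would identify the homomorphisms: any $R$-module map $\alpha \colon R/I \to R$ is determined by $\alpha(\bar 1) = r$, and such an $r$ is permissible exactly when $Ir = 0$, i.e. $r \in (0 :_R I) = \Ann I$. Hence the image of $\alpha$ is $rR$, and summing over all such $\alpha$ gives
\begin{equation*}
\T R {R/I} = \sum_{r \in (0 :_R I)} rR = (0 :_R I) = \Ann(\Ann M).
\end{equation*}
Wait — this shows $\T R {R/I} = \Ann I$, so the content of the lemma is the identity $\Ann I = \Ann \Ann M$, which is the statement $(0 :_R I) = (0 :_R (0 :_R I))$ with $I = \Ann M$. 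So the real work is to observe that $\Ann M$ is itself an annihilator ideal, namely $\Ann M = \Ann(R/\Ann M) = \Ann M$, and that for an ideal of the form $J = \Ann N$ one always has $\Ann \Ann J = J$ when... actually this is false in general, so let me reconsider the structure.

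The cleaner route: set $I = \Ann M$. Then as computed above $\T R {R/I} = \Ann I$. So I must show $\Ann I = \Ann \Ann M = \Ann \Ann (R/I)$. But $\Ann(R/I) = I$ directly from the definition (an element $r$ kills $R/I$ iff $r = r\cdot 1 \in I$), so $\Ann\Ann M = \Ann I$, and the two sides agree. Thus the proof reduces to the two elementary verifications: (a) computing $\hom R {R/I} R \cong (0 :_R I)$ and reading off the trace as $\sum_{r\in(0:_R I)} rR = (0:_R I)$; and (b) the tautology $\Ann(R/I) = I$. There is essentially no obstacle here — the only point requiring a moment of care is checking that the sum $\sum_{r \in (0:_R I)} rR$ genuinely equals the ideal $(0 :_R I)$ as a set (clear: each generator $r$ lies in it, and $(0:_R I)$ is an ideal containing all of them, while conversely any element of $(0:_R I)$ appears as one of the $r$'s), and noting that $M \cong R/\Ann M$ canonically for cyclic $M$. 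I would present it in exactly that order: reduce to $M = R/I$, compute $\hom R {R/I} R$, extract the trace ideal, then invoke $\Ann(R/I) = I$ to conclude.
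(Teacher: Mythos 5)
Your proof is correct and is essentially the paper's argument: both identify $\hom R {R/I} R$ with $(0:_R I)$ for $I=\Ann M$ and read off the trace as that ideal, the paper merely phrasing this via the $1\times n$ presentation matrix of $M$ and the condition $[y][M]=0$. The detour in your middle paragraph is unnecessary --- as you ultimately note, $\Ann(R/I)=I$ makes $\Ann\Ann M=\Ann I$ immediate, and no claim of the form $\Ann\Ann I=I$ is ever needed.
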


\begin{proof}
Set $M = Rm$. The presentation matrix $[M]$ of $M$ is a $1 \times n$ matrix whose entries generate $\Ann m$. Maps $\alpha \in \hom R {M} R$ induce and are induced by $1 \times 1$ matrices $[y] \in \hom R {R} R$   such that  $[y][M]= 0$. These are spanned by the generators of $\Ann \Ann {M}$.

 $$
 \xymatrix{R^n \ar@[->][rrrr]^{\left[\begin{array}{c}M \end{array}\right ]=\left[\begin{array}{cccc} x_1 & x_2 & \cdots & x_n \end{array}\right]} \ar@{-->>}[drr]&&&& R \ar@{->>}[dr]^{\hspace{.2in}\circlearrowleft} \ar@{-->}[rr]^{\left[\begin{array}{c}y \end{array}\right]} && R .\\
&&\Ann m \ar@{^{(}-->}[urr]^{\hspace{-1.75in}\circlearrowleft}&&& Rm \ar@{-->}[ur]_{\alpha}& \\
 }
 $$
 
It follows that $\T R {M} = \Ann \Ann M$.
\end{proof}
\pagebreak
\begin{corollary}
Let $M$ be a finitely generated $R$-module. Then $\T R M \subseteq \Ann \Ann M$. 
\end{corollary}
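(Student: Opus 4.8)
The plan is to reduce the containment to a pointwise statement about individual homomorphisms $M \to R$. By definition $\T R M = \sum_{\alpha \in \hom R M R} \alpha(M)$, so it suffices to show that each ideal $\alpha(M)$ lies in $\Ann \Ann M$; a sum of $R$-submodules of $\Ann \Ann M$ is again contained in $\Ann \Ann M$, even though the index set $\hom R M R$ may be infinite.

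Concretely, I would fix $\alpha \in \hom R M R$ and $r \in \Ann M$. Then for every $m \in M$ we have $r \cdot \alpha(m) = \alpha(r m) = \alpha(0) = 0$, so $r$ annihilates $\alpha(M)$. As $r \in \Ann M$ was arbitrary, this gives $\Ann M \subseteq \Ann(\alpha(M))$, equivalently $\alpha(M) \subseteq \Ann \Ann M$; summing over all $\alpha$ yields $\T R M \subseteq \Ann \Ann M$. A second route, which fits the placement of the corollary right after Lemma \ref{principal}, is to reduce to the cyclic case: writing $M = R m_1 + \cdots + R m_k$, restriction gives $\alpha(M) = \sum_i \alpha(R m_i)$ with each $\alpha|_{R m_i} \in \hom R {R m_i} R$, hence $\T R M \subseteq \sum_i \T R {R m_i}$. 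By Lemma \ref{principal}, $\T R {R m_i} = \Ann \Ann {R m_i} = \Ann \Ann {m_i}$, and since $\Ann M = \bigcap_i \Ann {m_i} \subseteq \Ann {m_i}$ we get $\Ann \Ann {m_i} \subseteq \Ann \Ann M$ for each $i$; summing over $i$ finishes the argument.

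There is no genuine obstacle here, so the only point worth flagging is what not to do: the quotient $\alpha(M)$ of $M$ by $\ker \alpha$ need not be cyclic, so one cannot simply invoke Lemma \ref{principal} on the ideals $\alpha(M)$ themselves. The clean move is either the direct annihilator computation above, or to apply the cyclic lemma to the generators $m_i$ of $M$ rather than to the images $\alpha(M)$.
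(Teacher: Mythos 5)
Both of your arguments are correct. Your second route is precisely the paper's proof: decompose $M = Rm_1 + \cdots + Rm_n$, note $\alpha(M) = \sum_i \alpha(Rm_i)$ so that $\T R M \subseteq \sum_i \T R {Rm_i}$, apply Lemma \ref{principal} to get $\T R {Rm_i} = \Ann \Ann {m_i}$, and conclude via $\Ann M \subseteq \Ann {m_i}$. Your first route --- fixing $\alpha \in \hom R M R$ and $r \in \Ann M$ and computing $r\,\alpha(m) = \alpha(rm) = 0$ --- is a genuinely more elementary argument that bypasses Lemma \ref{principal} and the presentation-matrix machinery entirely; it also makes no use of finite generation, so it proves the containment for arbitrary modules. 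What the paper's route buys is continuity with the preceding lemma (the corollary is literally a corollary of it), whereas your direct computation is shorter and more transparent about \emph{why} the containment holds: any homomorphic image of $M$ in $R$ is killed by $\Ann M$. Your cautionary remark is also apt: $\alpha(M)$ is cyclic only when $M$ is, so Lemma \ref{principal} cannot be applied to the images directly, and applying it to the generators (or avoiding it altogether) is the right move.
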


\begin{proof}
Let $\{ m_1, \ldots, m_n\}$ be a generating set for $M$.  For each $\alpha$ in $\hom R M R$, $\alpha (M) = \displaystyle \sum_{i = 1}^n\alpha (Rm_i)$. By Lemma \ref{principal} it follows that
\begin{align*}
\T M R &\subseteq \sum_{i=1}^n  \T R {Rm_i} \\
&= \sum_{i=1}^n  \Ann \Ann {m_i}\\ 
& \subseteq \Ann \Ann M   \qedhere \end{align*}
\end{proof}

\begin{remark}
We show $\T M R = \Ann \Ann M$ when $R$ is Artinian Gorenstein; see Proposition \ref{refl}.
\end{remark}

\begin{lemma} \label{lem}
Given an ideal $I$ in $R$, there is an equality $I = \Ann \Ann I$ if and only if $I = \Ann J$ for some ideal $J$. 
\end{lemma}

\begin{proof}
Taking $J = \Ann I$ yields the forward implication. Given $I = \Ann J$ for some ideal $J$, the backwards implication follows from the equality  \[\Ann \Ann \Ann J  = \Ann J. \qedhere \]
\end{proof}

\begin{corollary} \label{Anntrace}
Given an ideal $I$ in $R$, if $I = \Ann \Ann I$ then $I$ is a trace ideal. As a result, given an ideal J, $I= \Ann J$ is a trace ideal. 
\end{corollary}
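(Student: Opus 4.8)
The plan is to prove the first assertion by showing directly that $\T R I = I$ whenever $I = \Ann \Ann I$, and then to invoke Remark \ref{facts}, which says that an ideal is a trace ideal exactly when it coincides with its own trace ideal. So everything reduces to establishing the two inclusions $I \subseteq \T R I$ and $\T R I \subseteq I$.

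For the first inclusion I would observe that the inclusion map $\iota \colon I \hookrightarrow R$ is an element of $\hom R I R$ whose image is $I$ itself; since $\T R I$ is by definition the sum of $\alpha(I)$ over all $\alpha \in \hom R I R$, this already forces $I \subseteq \T R I$. For the reverse inclusion, I would use that $R$ is Noetherian, so $I$ is finitely generated, and apply the Corollary giving $\T R M \subseteq \Ann \Ann M$ for finitely generated $M$ to get $\T R I \subseteq \Ann \Ann I$. Combining the two inclusions with the hypothesis $I = \Ann \Ann I$ yields $\T R I = I$, and hence $I$ is a trace ideal.

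The point to be careful about — really the only subtlety here — is that a priori $\T R I$ could be strictly larger than $I$, since there may be homomorphisms $I \to R$ whose images are not contained in $I$; it is precisely the hypothesis $I = \Ann \Ann I$, together with the general upper bound $\T R I \subseteq \Ann \Ann I$, that rules this out. The second assertion then follows formally: given an arbitrary ideal $J$, Lemma \ref{lem} shows that $I = \Ann J$ satisfies $I = \Ann \Ann I$, so the first part applies and $\Ann J$ is a trace ideal.
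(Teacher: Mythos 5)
Your argument is correct and is essentially the paper's own proof: both establish the chain $I \subseteq \T R I \subseteq \Ann\, \Ann I = I$ (the first containment via the inclusion map, the second via the corollary bounding $\T R M$ by the double annihilator) and then deduce the second assertion from Lemma \ref{lem}. Your write-up just makes explicit the details the paper leaves as "immediate."
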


\begin{proof}
The first statement follows immediately from the containments \[I \subseteq \T R I \subseteq \Ann \Ann I.\] The second statement follows from the first and Lemma \ref{lem}.
\end{proof}

\begin{example}
Consider $R = k[x,y]_{(x,y)}/(x^2, xy)$ for some field $k$. Note $R$ has depth zero and Krull dimension one. The ideal $(x)$ is its own  trace ideal since $\Ann \Ann {(x)} = (x)$. The ideal $(y)$ is not a trace ideal since $\Ann \Ann {(y)} = (x,y)$.
\end{example}
\section{Main Results}
In this section $R$ is a local Noetherian commutative ring. We identify the trace ideals of modules over Artinian Gorenstein rings as their double annihilator and characterize local Artinian Gorenstein rings in terms of their classes of trace ideals. 

Recall  \cite[Theorem 18.1]{Mats}. In particular,
\begin{theorem} \label{gorenstein}
Let $(R, \mathfrak m)$ be a local Noetherian ring of Krull dimension $d$ with residue field $k$. Then the following are equivalent
\begin{enumerate}[label=(\roman*)]
\item $R$ is Gorenstein;
\item $\injd R = d$;
\item $\depth R = d$ and $\Ext d R {k} R \cong k$. \qed
\end{enumerate} 
\end{theorem}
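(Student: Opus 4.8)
The plan is to prove the three conditions equivalent by induction on $d=\dim R$, taking as the working definition of the Gorenstein property that $\injd R<\infty$ (so that (i)$\iff$(ii) becomes the assertion that finiteness of the self-injective dimension forces it to equal $d$). Throughout I would work with the Bass numbers $\mu^i:=\dim_k\Ext{i}{R}{k}{R}$, recalling the two standard facts that $\depth R=\min\{i:\mu^i\neq 0\}$ and that $\injd R=\sup\{i:\mu^i\neq 0\}$. I would also invoke Bass's equality: if $\injd R<\infty$, then $\injd R=\depth R$.

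For the base case $d=0$ the ring $R$ is Artinian, so $\depth R=0=d$ automatically, and all three conditions become assertions about $\mu^0=\dim_k\Ext{0}{R}{k}{R}=\dim_k\operatorname{soc}(R)$. Condition (ii) reads $\injd R=0$, i.e.\ $R$ is self-injective; condition (iii) reads $\Ext{0}{R}{k}{R}\cong k$, i.e.\ $\operatorname{soc}(R)$ is one-dimensional; and I would identify both with the Gorenstein property through the classical fact that an Artinian local ring is injective over itself exactly when its socle is simple. The cleanest route is Matlis duality: the injective hull $E=E_R(k)$ has simple socle and the same length as $R$, so a socle of dimension one forces the inclusion of the socle to extend to an isomorphism $R\cong E$.

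For the inductive step $d\geq 1$ I would split on $\depth R$. If $\depth R=0$, then (iii) fails because $\depth R=0\neq d$; moreover a finite $\injd R$ would equal $\depth R=0$ by Bass's equality, making $R$ self-injective and hence Artinian, contradicting $d\geq 1$, so $\injd R=\infty$ and (i) and (ii) fail as well---all three are simultaneously false. If instead $\depth R\geq 1$, I would choose a non-zerodivisor $x\in\mathfrak m$ and pass to $\bar R=R/xR$, of dimension $d-1$. Here three standard change-of-rings inputs do the work: $\depth\bar R=\depth R-1$; the Rees isomorphism $\Ext{i}{R}{k}{R}\cong\Ext{i-1}{\bar R}{k}{\bar R}$ for all $i$ (valid because $x$ annihilates $k$ and is regular on $R$); and, reading these off the Bass numbers together with $\mu^0=0$, the degree shift $\operatorname{inj\,dim}_{\bar R}(\bar R)=\injd R-1$. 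Each of (i), (ii), (iii) for $R$ thereby translates into the corresponding condition for $\bar R$ in dimension $d-1$ (in particular, the pair ``$\depth R=d$ and $\Ext{d}{R}{k}{R}\cong k$'' becomes ``$\depth\bar R=d-1$ and $\Ext{d-1}{\bar R}{k}{\bar R}\cong k$''), and the inductive hypothesis closes the loop.

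The step I expect to demand the most care is the transfer machinery in the inductive step: verifying the Rees change-of-rings isomorphism in every degree and checking that finiteness of the injective dimension together with the correct index shift survive the passage to $\bar R$ (cleanest via the short exact sequence $0\to R\xrightarrow{x}R\to\bar R\to 0$ and the associated long exact Ext sequences). The other genuinely non-formal point is the base-case equivalence between self-injectivity and a simple socle, which rests on Matlis duality rather than on bookkeeping.
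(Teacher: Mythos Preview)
Your proposal is correct and is essentially the standard inductive proof (reduction modulo a non-zerodivisor, tracking Bass numbers through the Rees change-of-rings isomorphism, with the Artinian base case handled via Matlis duality). There is nothing to compare against, however: the paper does not prove this theorem at all---it is introduced with ``Recall [Theorem 18.1]{Mats}'' and closed with a \qed, i.e., it is simply quoted from Matsumura's \emph{Commutative Ring Theory} as background. Your outline is, in fact, close to the argument Matsumura gives there, so no discrepancy arises.
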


\begin{remark}\label{Art}
There are several arguments showing that all ideals in a local Artinian Gorenstein ring are trace ideals:
\begin{enumerate}[label=(\roman*)]
 \item Given an ideal  $I$ in $R$,  one such argument considers the exact sequence \[0 \lra I \lra R \lra R/I \lra 0.\]
Applying $\hom R {\_} R$ yields the top exact sequence below
 $$
 \xymatrix{ \cdots \ar@[->][r] & \hom R R R \ar@{->}[r]  \ar@[->][d]^{\cong} & \hom R I R \ar@[->][r] \ar@[->][d]_{=} & \Ext 1 R {R/I} R \ar@[->][d]_{=} \ar@[->][r]& \cdots \\
&R \ar@{->>}[r]&\hom R I R \ar@{->}[r]&0& \\
 }
 $$
 
 where $\Ext 1 R {R/I} R$= 0 because $R$ is self-injective. As a result, all maps $\alpha$ from $I$ to $R$ are given my multiplication by some element $r$ in $R$. Therefore, $I$ is its own trace ideal. 
 
\item A second argument is found in the proof of Proposition 1.2 in \cite{BrandtChar}. Here Brandt shows that $M$ being a trace module in $X$ implies  that $M$ is an $\End R X$-submodule of $X$ and that the converse holds when $X$ is injective. In particular, when $R$ is self-injective the trace ideals of $R$ are precisely the $R$-submodules of $R$, that is, the ideals. 

$(\Lra)$ Recall $\hom R M X$ is an $\End R X$-module. Thus
\begin{align*}
\End R X \T X M & = \End R X \hom R M X M\\
& = \hom R M X M\\
& = \T X M
\end{align*}

$(\Longleftarrow)$
Say $i$ is the inclusion $M \subseteq X $ and $\phi$ is any map in $\hom R M X$. Since $X$ is injective, there exists $\bar \phi$ in $\End R X$ such that $\bar \phi i  = \phi$.  By assumption $M$ is an $\End R X$-module, so that $\phi (M) =\bar \phi i (M)=  \bar \phi|_{M} (M) \subseteq M$. Therefore $M$ is a trace module in $X$; see Remark \ref{facts}. 

\item A third argument proceeds from Corollary \ref{Anntrace} and  Lemma \ref{GorAnn} below.
\end{enumerate}
\end{remark}
The following characterization of local Artinian Gorenstein rings is well-known; see, for example, Exercise 3.2.15 in \cite{BandH}. 

\begin{lemma} \label{GorAnn}
Let $R$ be a local Artinian commutative ring.  Then $R$ is a Gorenstein ring if and only if $I = \Ann \Ann I$ for every ideal $I$ of $R$. \qed 
 \end{lemma}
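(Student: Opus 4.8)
The plan is to route both implications through the socle description of the Gorenstein property contained in Theorem \ref{gorenstein}: since $R$ is Artinian its Krull dimension is $0$, so applying that result with $d = 0$ shows that $R$ being Gorenstein is equivalent to $R$ being self-injective ($\injd R = 0$) and also to $\Ext 0 R k R = \hom R k R \cong k$. The latter is the convenient form: a homomorphism $k = R/\mathfrak m \to R$ is determined by the image of $1$, which must lie in $(0 :_R \mathfrak m) = \Ann \mathfrak m$, so $\hom R k R \cong \Ann \mathfrak m$ as $k$-vector spaces, and $R$ is Gorenstein if and only if $\dim_k \Ann \mathfrak m = 1$, i.e. the socle $\Ann \mathfrak m$ is a simple module. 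I would also record at the outset the inclusion $I \subseteq \Ann \Ann I$, valid over any ring: if $r \in I$ and $s \in \Ann I$ then $rs = 0$, so $r$ annihilates $\Ann I$.

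For the reverse implication I would give a short socle argument. Assume $I = \Ann \Ann I$ for every ideal $I$, and choose a minimal nonzero ideal $\mathfrak a$ (one exists since $R$ is Artinian). Being a simple submodule, $\mathfrak a \cong k$, so $\mathfrak m \mathfrak a = 0$, whence $\mathfrak m \subseteq \Ann \mathfrak a$; since $\mathfrak a \ne 0$ the annihilator is proper, forcing $\Ann \mathfrak a = \mathfrak m$. Taking annihilators once more gives $\Ann \Ann \mathfrak a = \Ann \mathfrak m$, the socle of $R$. The hypothesis then forces $\mathfrak a = \Ann \mathfrak m$; as $\mathfrak a$ is simple, the socle is simple, and by the opening remark $R$ is Gorenstein.

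For the forward implication I would exploit self-injectivity to turn $\hom R {\_} R$ into a length-preserving duality on finite-length modules. Because $R$ is injective, $\hom R {\_} R$ is exact on the category of finitely generated ($=$ finite length) $R$-modules, so applying it along a composition series and inducting on length shows $\ell(\hom R M R) = \ell(M)$ for every finite length $M$; the base case is precisely $\hom R k R \cong k$. For an ideal $I$, the explicit identification $\hom R {R/I} R \cong \Ann I$ (a map out of $R/I$ sends $1$ into $(0 :_R I) = \Ann I$) then gives $\ell(\Ann I) = \ell(R/I) = \ell(R) - \ell(I)$. Replacing $I$ by $\Ann I$ and combining yields $\ell(\Ann \Ann I) = \ell(R) - \ell(\Ann I) = \ell(I)$. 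Since $I \subseteq \Ann \Ann I$ and the two ideals have equal finite length, they coincide.

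The main obstacle is this forward direction, specifically converting the Gorenstein hypothesis into the length-preserving behavior of $\hom R {\_} R$: one must use injectivity of $R$ to guarantee exactness, so that dualizing $0 \lra I \lra R \lra R/I \lra 0$ stays right-exact, and the socle computation $\hom R k R \cong k$ to seed the induction. Once length preservation is in hand the remainder is bookkeeping with lengths together with the elementary containment $I \subseteq \Ann \Ann I$. The reverse direction, by contrast, is immediate from the socle argument.
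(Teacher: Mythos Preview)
Your argument is correct in both directions. The reverse implication via a minimal nonzero ideal is clean, and the forward implication using self-injectivity to make $\hom R {\_} R$ length-preserving (seeded by $\hom R k R \cong k$) is the standard route; the length bookkeeping $\ell(\Ann I)=\ell(R)-\ell(I)$ followed by $\ell(\Ann\Ann I)=\ell(I)$, combined with $I\subseteq\Ann\Ann I$, is exactly what one wants.

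There is nothing to compare against, however: the paper does not prove this lemma. It is stated as well-known, marked with \qed, and the reader is referred to Exercise~3.2.15 in Bruns--Herzog. So you have supplied a complete proof where the paper intentionally omits one.
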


\begin{proposition} \label{refl}
Let $R$ be a local Artinian Gorenstein ring  and $M$ a finitely generated $R$-module. Then $\T R M = \Ann \Ann M$.
\end{proposition}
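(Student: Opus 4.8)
The plan is to establish both containments $\T R M \subseteq \Ann\Ann M$ and $\Ann\Ann M \subseteq \T R M$. The first is already available: it is the Corollary following Lemma \ref{principal}, valid for any finitely generated module over any Noetherian ring. So the real work is the reverse containment, and here the plan is to exploit the self-injectivity of $R$ (equivalently, that $R$ is its own injective hull, by Theorem \ref{gorenstein}) together with Lemma \ref{GorAnn}, which says every ideal of $R$ is its own double annihilator.

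The key steps I would carry out are as follows. First, set $I = \Ann M$ and $J = \Ann\Ann M = \Ann I$, so that by Lemma \ref{GorAnn} we have $J = \Ann\Ann J$, and by Corollary \ref{Anntrace} the ideal $J$ is a trace ideal, indeed $J = \T R J$. Next I would argue that $M$ admits a surjection onto a module whose annihilator is exactly $I$ in a way that lets maps out of $M$ reach all of $J$; the cleanest route is to embed $R/I$ into a suitable power of $R$. Concretely, since $R$ is self-injective (Artinian Gorenstein), $\hom R {R/I} R = \Ann I = J$ as an ideal of $R$, and more is true: the natural map realizes $J$ as the image of the evaluation $\hom R {R/I} R \otimes (R/I) \to R$. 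Then, because $M$ surjects onto $R/I$ (as $I = \Ann M$ forces $I M = 0$, giving a surjection $M \surj M/IM$, and one can further map onto a faithful-over-$R/I$ cyclic piece, namely $R/I$ itself via a suitable element), composing any $\alpha \in \hom R {R/I} R$ with this surjection $M \surj R/I$ produces a map in $\hom R M R$ with the same image. Hence $\T R M \supseteq \T R {R/I} = \Ann\Ann{R/I} = \Ann I = J$, using Lemma \ref{principal} applied to the cyclic module $R/I$.

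Wait — I should be careful: it is not automatic that $M$ surjects onto $R/I$ with $I = \Ann M$ when $M$ is not cyclic, since a module can have annihilator $I$ without having $R/I$ as a quotient in general. But over a local Artinian Gorenstein (hence self-injective) ring this works via Matlis duality: $R$ is the injective hull of $k$, so $(-)^\vee = \hom R {-} R$ is a faithful exact duality on finitely generated modules. Under this duality $\Ann M = \Ann M^\vee$, and a surjection $R \surj M^\vee$ (pick generators of $M^\vee$) dualizes to an injection $M \inj R$; tracking annihilators, the submodule $M \subseteq R$ must satisfy $\Ann M = \Ann M = I$, and a cyclic submodule $Rm$ of $R$ with $\Ann m$ minimal among annihilators of elements of $M$ can be chosen so that, summing over a generating set $m_1,\dots,m_n$ of $M$, we get $\T R M = \sum_i \T R {Rm_i} = \sum_i \Ann\Ann{m_i} = \Ann\bigl(\bigcap_i \Ann m_i\bigr) = \Ann\Ann M$, where the last equality uses $\bigcap_i \Ann m_i = \Ann M$ and the double-annihilator identity of Lemma \ref{GorAnn} for the ideal $\sum_i$ — actually $\Ann(\bigcap_i \Ann m_i) = \Ann(\Ann M)$ directly. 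So in fact the slickest proof avoids quotients entirely: combine the Corollary to Lemma \ref{principal} (giving $\T R M \subseteq \sum_i \Ann\Ann{m_i} \subseteq \Ann\Ann M$) with the reverse chain $\Ann\Ann M = \Ann(\bigcap_i \Ann m_i)$, and show each $\Ann\Ann{m_i}$, being a trace ideal that appears inside $\T R M$ only if the corresponding map from $Rm_i$ extends to $M$ — which it does, because $R$ self-injective means $\hom R {Rm_i} R \to \hom R M R$ is surjective after the right identifications? No — the extension direction is $\hom R M R \to \hom R {Rm_i} R$ by restriction, the wrong way.

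The honest main obstacle, then, is exactly this: going from the cyclic case (Lemma \ref{principal}) to the general case requires producing, for each generator $m_i$, an actual homomorphism $M \to R$ whose image is $\Ann\Ann{m_i}$, and restriction maps the wrong direction. I expect the resolution to be the embedding $M \inj R^{\oplus t}$ furnished by Matlis duality over the self-injective ring $R$: write $M \subseteq R^{\oplus t}$, let $\pi_j \colon R^{\oplus t} \to R$ be the coordinate projections, and observe $\T R M \supseteq \sum_j \pi_j(M)$, while on the other hand $\Ann\Ann M \subseteq \Ann\Ann{(\sum_j \pi_j(M))}$; then a direct annihilator computation plus Lemma \ref{GorAnn} forces equality. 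So the proof plan is: (1) cite the Corollary for $\subseteq$; (2) use self-injectivity of $R$ to embed $M$ into a finite free module $F$; (3) use the coordinate projections to show $\T R M \supseteq \T R {M'}$ for the image $M' \subseteq F$, reducing to submodules of free modules; (4) for $M' \subseteq R$ cyclic apply Lemma \ref{principal}, and for the general submodule of $F$ run the sum-over-coordinates argument together with the identity $\Ann\Ann{M} = \Ann\bigl(\bigcap_i \Ann m_i\bigr)$ and Lemma \ref{GorAnn} to close the loop. The delicate point to get right is step (3)–(4): ensuring the coordinate projections collectively capture all of $\Ann\Ann M$, for which the key identity is $\Ann\Ann M = \sum_i \Ann\Ann {m_i}$ over a Gorenstein Artinian ring — this is where Lemma \ref{GorAnn} does the essential work, converting a statement about intersections of annihilators into one about sums of double annihilators.
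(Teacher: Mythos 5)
Your final plan (embed $M$ into a finite free module and use coordinate projections) is correct and completable, but you reach it only after wrongly abandoning a line of argument that was in fact fine, and you leave the decisive computation unstated. The wrong turn first: you dismiss the extension argument on the grounds that the restriction map $\hom R M {R} \to \hom R {Rm_i} R$ goes ``the wrong way.'' It goes exactly the right way. Since $R$ is Artinian Gorenstein it is self-injective (Theorem \ref{gorenstein}), so applying $\hom R {\_} R$ to the inclusion $Rm_i \subseteq M$ yields a \emph{surjection} $\hom R M R \surj \hom R {Rm_i} R$, and that surjectivity is precisely the statement that every map $Rm_i \to R$ extends to $M$; hence $\T R {Rm_i} \subseteq \T R M$ for each generator $m_i$. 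Together with Lemma \ref{principal} and the identity $\sum_i \Ann \Ann {m_i} = \Ann \bigl( \bigcap_i \Ann {m_i} \bigr) = \Ann \Ann M$ --- which does follow from Lemma \ref{GorAnn}, since $\Ann {(\sum_i \Ann {J_i})} = \bigcap_i \Ann \Ann {J_i} = \bigcap_i J_i$ and therefore $\sum_i \Ann {J_i} = \Ann \Ann {(\sum_i \Ann {J_i})} = \Ann {(\bigcap_i J_i)}$ --- this already closes the proof with no embedding needed. As for your preferred route, the ``direct annihilator computation'' you defer is the entire point and must be written out: with $M \inj R^{\oplus t}$ the coordinate projections are jointly injective on $M$, so the ideal $N = \sum_j \pi_j(M)$ satisfies $\Ann N = \Ann M$, and then $N \subseteq \T R M \subseteq \Ann \Ann M = \Ann \Ann N = N$ by Lemma \ref{GorAnn}, forcing equality throughout. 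Either completion is a valid proof.

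For comparison, the paper's argument is shorter and avoids both generators and embeddings: every finitely generated module over an Artinian Gorenstein ring is reflexive (Vasconcelos), reflexivity gives $\Ann M = \Ann {\T R M}$ (cited from \cite{Lindo1}), and then $\Ann \Ann M = \Ann \Ann {\T R M} = \T R M$ by Lemma \ref{GorAnn}. Both approaches rest on the same two pillars --- self-injectivity/reflexivity of the ambient ring plus the double-annihilator property of Lemma \ref{GorAnn} --- but the paper packages the module-theoretic input into the single cited identity $\Ann M = \Ann {\T R M}$, whereas you reconstruct that input by hand; your version is more self-contained, at the cost of the bookkeeping above.
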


\begin{proof}
Every finitely generated module over an Artinian Gorenstein ring is reflexive  and $M$ being reflexive implies $\Ann M = \Ann {\T R M}$; see {\cite[Corollary 2.3 ]{Vasc1}} and  \cite[Proposition 2.8 (vii)]{Lindo1}. Also, since $R$ is Artinian Gorenstein, by Lemma \ref{GorAnn} one has $I = \Ann \Ann I$ for all ideals $I \subseteq R$.  It follows that \[\Ann \Ann M = \Ann \Ann {\T R M} = \T R M. \qedhere\] \end{proof}

\begin{theorem} \label{main}
Let $R$ be a local Noetherian ring with maximal ideal $\mathfrak m$. Then the following are equivalent
\begin{enumerate}[label=(\roman*)]
\item $R$ is an Artinian Gorenstein ring;
\item Every ideal is a trace ideal;
\item Every principal ideal is a trace ideal.
 
\end{enumerate}

\end{theorem}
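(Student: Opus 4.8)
The plan is to prove the equivalence by the cycle $(i)\Rightarrow(ii)\Rightarrow(iii)\Rightarrow(i)$. The implication $(i)\Rightarrow(ii)$ is already available: by Proposition~\ref{refl} (or by any of the arguments in Remark~\ref{Art}) every ideal $I$ over a local Artinian Gorenstein ring satisfies $I=\T R I$, so every ideal is a trace ideal. The implication $(ii)\Rightarrow(iii)$ is immediate since principal ideals are ideals. Thus the whole content of the theorem is in the implication $(iii)\Rightarrow(i)$, and that is where I would concentrate the work.

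For $(iii)\Rightarrow(i)$, assume every principal ideal of the local Noetherian ring $(R,\mathfrak m)$ is a trace ideal. By Lemma~\ref{principal}, for any $x\in R$ we have $\T R {Rx}=\Ann\Ann{Rx}=\Ann\Ann x$, so the hypothesis says $(x)=\Ann\Ann x$ for every $x\in R$; equivalently, every principal ideal is the annihilator of some ideal (Lemma~\ref{lem}). The first step is to show $R$ is Artinian, i.e.\ $\dim R=0$. The key pressure point is depth: I would argue $\depth R=0$. Indeed, suppose $\depth R>0$ and pick a nonzerodivisor $x\in\mathfrak m$; then $\Ann x=0$, so $\Ann\Ann x=\Ann 0=R\neq (x)$, contradicting the hypothesis. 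Hence $\depth R=0$. Next I would upgrade this to $\dim R=0$: the plan is to show that a nonzerodivisor cannot be avoided unless the ring is already zero-dimensional. If $\dim R\geq 1$, then $\mathfrak m\not\subseteq\bigcup_{\mathfrak p\in\Ass R}\mathfrak p$ would give a nonzerodivisor in $\mathfrak m$ and the same contradiction; since $\depth R=0$ forces $\mathfrak m\in\Ass R$, I instead want to produce an element $x$ with $(x)\subsetneq\Ann\Ann x$ directly. A clean route: take a minimal prime $\mathfrak p$ of $R$ with $\dim R/\mathfrak p=\dim R\ge 1$ and an element $x\in\mathfrak m$ whose image in $R/\mathfrak p$ is a parameter; one then checks $\Ann x$ is contained in the (nonzero, $\mathfrak m$-primary) nilradical-type ideal while $\Ann\Ann x$ is strictly larger than $(x)$ because $(x)$ is not $\mathfrak m$-primary whereas $\Ann\Ann x$, being an annihilator, behaves like a ``saturated'' ideal. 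I expect this step—ruling out positive dimension—to be the main obstacle and the place where the Artinian hypothesis in Lemma~\ref{GorAnn} has to be earned rather than assumed.

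Once $R$ is known to be Artinian, the remaining step is to invoke Lemma~\ref{GorAnn}: since the hypothesis gives $I=\T R I\subseteq\Ann\Ann I$ for every principal $I$, and the reverse containment $\T R I\subseteq\Ann\Ann I$ holds in general (Corollary after Lemma~\ref{principal}), we get $I=\Ann\Ann I$ for all principal ideals. To apply Lemma~\ref{GorAnn} I need this for \emph{all} ideals, so I would first promote the principal-ideal statement to the general one: for an arbitrary ideal $J=(a_1,\dots,a_n)$, one has $J\subseteq\Ann\Ann J\subseteq\bigcap_i\Ann\Ann{a_i}=\bigcap_i(a_i)$—but that intersection need not be $J$, so instead I would use that over an Artinian Gorenstein ring Matlis duality makes $\Ann\Ann{(-)}$ the ``double dual'' operation and every ideal is reflexive; more elementarily, once $R$ is Artinian and every principal ideal satisfies $I=\Ann\Ann I$, a short induction on the number of generators, using modularity of annihilators ($\Ann(I+J)=\Ann I\cap\Ann J$ and the exactness coming from injectivity of $R$ as its own module), yields $J=\Ann\Ann J$ for every ideal $J$. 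Then Lemma~\ref{GorAnn} says $R$ is Gorenstein, completing $(iii)\Rightarrow(i)$ and hence the theorem.

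A subtle point worth flagging in the write-up: the equivalence is only claimed for $R$ local, and locality is used precisely in the prime-avoidance/parameter argument that forces $\dim R=0$; without it one should not expect the statement (e.g.\ a one-dimensional regular ring has every ideal principal and every nonzero principal ideal equal to its own trace ideal since it is a domain, so ``every principal ideal is a trace ideal'' can hold without the ring being Artinian). I would therefore present $(iii)\Rightarrow(i)$ carefully in two movements—first $\dim R=0$, then $R$ Gorenstein via Lemma~\ref{GorAnn}—and state explicitly where locality enters.
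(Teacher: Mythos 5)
Your skeleton matches the paper's: $(i)\Rightarrow(ii)$ via Lemma \ref{GorAnn} and Corollary \ref{Anntrace}, $(ii)\Rightarrow(iii)$ trivially, and the opening move of $(iii)\Rightarrow(i)$ --- every non-unit $r$ satisfies $(r)=\Ann\Ann{(r)}$ by Lemma \ref{principal}, hence is a zerodivisor, so $\depth R=0$ and $\mathfrak m\in\Ass R$ --- is exactly right. The first genuine gap is the step $\dim R=0$. Your sketch (choose $x$ a parameter modulo a minimal prime and argue that $\Ann\Ann{(x)}$ is ``saturated'' and strictly larger than $(x)$) is not an argument, and you flag it yourself as the main obstacle without closing it. The paper's device is the Krull Intersection Theorem: if $\dim R>0$ the nilradical, being the intersection of the minimal primes, is properly contained in $\mathfrak m$, so there is a non-nilpotent $x\in\mathfrak m$; each $x^n$ is then a nonzero non-unit, hence a zerodivisor, so $0\neq\Ann{(x^n)}\subseteq\mathfrak m$ and therefore $\Ann{\mathfrak m}\subseteq\Ann\Ann{(x^n)}=(x^n)$ for all $n$. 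Thus $\Ann{\mathfrak m}\subseteq\bigcap_{n}(x^n)=0$, contradicting $\mathfrak m\in\Ass R$.

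The second gap is more serious: your route from ``$R$ Artinian and $I=\Ann\Ann I$ for principal $I$'' to ``Gorenstein'' is circular. To promote the double-annihilator identity to all ideals you invoke Matlis duality over an Artinian \emph{Gorenstein} ring, or ``the exactness coming from injectivity of $R$ as its own module,'' but self-injectivity \emph{is} the Gorenstein property you are trying to prove; likewise the induction you sketch needs $\Ann{(K\cap L)}=\Ann K+\Ann L$, which is again a Gorenstein-only identity. The paper avoids Lemma \ref{GorAnn} entirely in this direction: once $\dim R=0$, every minimal nonzero ideal is principal and isomorphic to $k$, and if there were two distinct ones $I\neq J$, the composite $I\cong k\cong J\hookrightarrow R$ would be a homomorphism $I\to R$ whose image $J$ satisfies $J\cap I=0$, so $J\not\subseteq I$ and $I$ would fail to be a trace ideal. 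Hence the socle is simple and $R$ is Gorenstein. Finally, your closing ``subtle point'' is wrong: in a one-dimensional regular local ring a nonzero principal ideal $(x)$ is isomorphic to $R$, so $\T R {(x)}=R\neq(x)$ and $(x)$ is \emph{not} a trace ideal; such rings are consistent with the theorem, not evidence that locality is what saves it.
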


\begin{proof}
If $R$ is Artinian Gorenstein then $I = \Ann \Ann I$ for each ideal $I$ in $R$; see Lemma \ref{GorAnn}. By Corollary \ref{Anntrace} every ideal is a trace ideal and, in particular, every principal ideal is a trace ideal.

Now assume every principal ideal is a trace ideal.  For each $r$ in $R$ one has \[{(r) = \Ann \Ann {(r)}};\] see Lemma \ref{principal}.  Therefore $r$ is a zerodivisor, $\depth R =0$ and $\mathfrak m \in \Ass R$.  

Recall that the nilradical of a ring is the intersection of its minimal primes. Since $\depth R  = 0$, if  $\dim R >0$ then there exists a zerodivisor $x$ in $R$ which is not nilpotent. For all $n \in \mathbb{N}$, $\Ann {(x^n)}$ is nonzero and contained in $\mathfrak m$. Therefore \[ \Ann \mathfrak m \subseteq \Ann \Ann {(x^n)} = (x^n). \]
That is $\Ann \mathfrak m \subseteq \cap_{n\in \mathbb{N}} (x^n)$ and so $\Ann \mathfrak m = 0$ by the Krull Intersection Theorem \cite[Corollary 5.4]{EisenbudCommAlg}. This is a contradiction because $\mathfrak m \in \Ass R$. Thus $\dim R =0$.

As a zero-dimensional Cohen-Macaulay ring, the socle of $R$ is the sum of the finite number of minimal nonzero ideals, each isomorphic to $k =R/\mathfrak m$. Since each minimal nonzero ideal is also a trace ideal, the socle of $R$ is isomorphic to $k$. Therefore $R$ is Artinian and Gorenstein. 
\end{proof}

\begin{remark} Given an Artinian ring $R$, one commonly determines if $R$ is Gorenstein by checking if its socle is one-generated over $R$. This is equivalent to checking that $k$ is a trace ideal in $R$. As a consequence of Theorem \ref{main}, one can use any ideal to check if $R$ is Gorenstein. In practice, given an Artinian ring $R$, $R$ is not Gorenstein if there exists an ideal $I$ in $R$ and a map $\alpha \in \hom R I R$ such that $\IM \alpha \not\subseteq I$. 
\end{remark}

\begin{example}
Consider the subring $S = k[x^4, x^3y, xy^3,y^4] \subset k[x,y]$ for some field $k$. Set $R = k[x^4, x^3y, xy^3,y^4]/(x^4, y^4)$. Then $R$ is not Gorenstein because there exists an  $R$-homomorphism 
$$
\xymatrix @R=.1pc{(x^3y)\ar@{->}[r] & (xy^3)\\
x^3y \ar@{|->}[r] & xy^3.
}
$$

whose image is not contained in $(x^3y)$. 
\end{example}

\begin{remark}
It is known that all ideals of grade greater than or equal to 2 are trace ideals, as are all ideals in local Artinian Gorenstein rings; see Remark 2.3 in \cite{Lindo1} and Remark \ref{Art} above. Recently, a conjecture of Huneke and Wiegand has been verified for modules isomorphic to trace ideals in one dimensional Gorenstein domains; see \cite[Proposition 6.8]{Lindo1}. However, an ideal may be isomorphic to a trace ideal without being a trace ideal itself. For example consider the ideal $I = (xy, xz)$ in $k[x,y,z]$, for some field $k$, where $\T R I = (y,z)$. This investigation leads naturally to the following open questions:

\end{remark}
\begin{Question}
In which rings is every ideal isomorphic to a trace ideal?
\end{Question}

\begin{Question}
What is the class of modules isomorphic to trace ideals over \\{one-dimensional} Gorenstein domains?
 \end{Question}
 
 
 

\section*{Acknowledgements}

Special thanks to Susan Loepp. Thanks also to Andrew Bydlon, Peder Thompson, Graham Leuschke, Ivan Martino and Anthony Iarrobino for several useful discussions. 

\bibliographystyle{amsplain}	

\begin{bibdiv}
\begin{biblist}

\bib{MaxOrders}{article}{
      author={Auslander, Maurice},
      author={Goldman, Oscar},
       title={Maximal orders},
        date={1960},
        ISSN={0002-9947},
     journal={Trans. Amer. Math. Soc.},
      volume={97},
       pages={1\ndash 24},
      review={\MR{0117252 (22 \#8034)}},
}

\bib{BrandtChar}{article}{
      author={Brandt, J\o~rgen},
       title={Characteristic submodules},
        date={1982},
        ISSN={0024-6107},
     journal={J. London Math. Soc. (2)},
      volume={25},
      number={1},
       pages={35\ndash 38},
         url={https://doi.org/10.1112/jlms/s2-25.1.35},
      review={\MR{645862}},
}

\bib{BandH}{book}{
      author={Bruns, Winfried},
      author={Herzog, J{\"u}rgen},
       title={Cohen-{M}acaulay rings},
      series={Cambridge Studies in Advanced Mathematics},
   publisher={Cambridge University Press},
     address={Cambridge},
        date={1993},
      volume={39},
        ISBN={0-521-41068-1},
      review={\MR{1251956 (95h:13020)}},
}

\bib{DingTrace}{article}{
    AUTHOR = {Ding, Songqing},
     TITLE = {A note on the index of {C}ohen-{M}acaulay local rings},
   JOURNAL = {Comm. Algebra},
  FJOURNAL = {Communications in Algebra},
    VOLUME = {21},
      YEAR = {1993},
    NUMBER = {1},
     PAGES = {53--71},
      ISSN = {0092-7872},
   MRCLASS = {13H10 (13C14)},
  MRNUMBER = {1194550},
MRREVIEWER = {Roger A. Wiegand},
       URL = {https://doi.org/10.1080/00927879208824550},
}

\bib{EisenbudCommAlg}{book}{
      author={Eisenbud, David},
       title={Commutative algebra},
      series={Graduate Texts in Mathematics},
   publisher={Springer-Verlag, New York},
        date={1995},
      volume={150},
        ISBN={0-387-94268-8; 0-387-94269-6},
         url={https://doi.org/10.1007/978-1-4612-5350-1},
        note={With a view toward algebraic geometry},
      review={\MR{1322960}},
}

\bib{TraceProperty1987}{article}{
      author={Fontana, Marco},
      author={Huckaba, James~A.},
      author={Papick, Ira~J.},
       title={Domains satisfying the trace property},
        date={1987},
        ISSN={0021-8693},
     journal={J. Algebra},
      volume={107},
      number={1},
       pages={169\ndash 182},
         url={http://dx.doi.org/10.1016/0021-8693(87)90083-4},
      review={\MR{883879}},
}

\bib{Herberatraceideal}{article}{
      author={Herbera, Dolors},
      author={P{\v{r}}{\'{\i}}hoda, Pavel},
       title={Reconstructing projective modules from its trace ideal},
        date={2014},
        ISSN={0021-8693},
     journal={J. Algebra},
      volume={416},
       pages={25\ndash 57},
         url={http://dx.doi.org/10.1016/j.jalgebra.2014.06.010},
      review={\MR{3232793}},
}

\bib{tracecanonical}{unpublished}{
      author={Herzog, J{\"u}rgen},
      author={Hibi, Takayuki},
      author={Stamate, Dumitri~I.},
       title={The trace of the canonical module},
        date={2016},
        note={In preparation},
}

\bib{Lam}{book}{
      author={Lam, T.~Y.},
       title={Lectures on modules and rings},
      series={Graduate Texts in Mathematics},
   publisher={Springer-Verlag, New York},
        date={1999},
      volume={189},
        ISBN={0-387-98428-3},
         url={http://dx.doi.org/10.1007/978-1-4612-0525-8},
      review={\MR{1653294 (99i:16001)}},
}

\bib{Lindo1}{article}{
      author={Lindo, Haydee},
       title={Trace ideals and centers of endomorphism rings of modules over
  commutative rings},
        date={2017},
        ISSN={0021-8693},
     journal={J. Algebra},
      volume={482},
       pages={102\ndash 130},
         url={http://dx.doi.org/10.1016/j.jalgebra.2016.10.026},
      review={\MR{3646286}},
}

\bib{LucasRTP}{article}{
      author={Lucas, Thomas~G.},
       title={The radical trace property and primary ideals},
        date={1996},
        ISSN={0021-8693},
     journal={J. Algebra},
      volume={184},
      number={3},
       pages={1093\ndash 1112},
         url={http://dx.doi.org/10.1006/jabr.1996.0301},
      review={\MR{1407887}},
}

\bib{LucasMcNair2011}{article}{
      author={Lucas, Thomas~G.},
      author={McNair, Dawn},
       title={Trace properties in rings with zero divisors},
        date={2011},
        ISSN={0021-8693},
     journal={J. Algebra},
      volume={343},
       pages={201\ndash 223},
         url={http://dx.doi.org/10.1016/j.jalgebra.2011.05.039},
      review={\MR{2824554}},
}

\bib{Mats}{book}{
      author={Matsumura, Hideyuki},
       title={Commutative ring theory},
     edition={Second},
      series={Cambridge Studies in Advanced Mathematics},
   publisher={Cambridge University Press},
     address={Cambridge},
        date={1989},
      volume={8},
        ISBN={0-521-36764-6},
        note={Translated from the Japanese by M. Reid},
      review={\MR{1011461 (90i:13001)}},
}

\bib{Vasc1}{article}{
      author={Vasconcelos, Wolmer~V.},
       title={Reflexive modules over {G}orenstein rings},
        date={1968},
        ISSN={0002-9939},
     journal={Proc. Amer. Math. Soc.},
      volume={19},
       pages={1349\ndash 1355},
      review={\MR{0237480 (38 \#5762)}},
}

\bib{Vascaffine}{article}{
      author={Vasconcelos, Wolmer~V.},
       title={Computing the integral closure of an affine domain},
        date={1991},
        ISSN={0002-9939},
     journal={Proc. Amer. Math. Soc.},
      volume={113},
      number={3},
       pages={633\ndash 638},
         url={http://dx.doi.org/10.2307/2048595},
      review={\MR{1055780 (92b:13013)}},
}

\bib{Whitehead1980}{article}{
      author={Whitehead, James~M.},
       title={Projective modules and their trace ideals},
        date={1980},
        ISSN={0092-7872},
     journal={Comm. Algebra},
      volume={8},
      number={19},
       pages={1873\ndash 1901},
         url={http://dx.doi.org/10.1080/00927878008822551},
      review={\MR{588450}},
}

\end{biblist}
\end{bibdiv}

\end{document}